\documentclass[12pt,reqno,sort]{elsarticle}

\usepackage{amsfonts,color,amsmath,amssymb,fancyhdr}
\usepackage{amsthm}
\usepackage{graphicx}

\usepackage[colorlinks,linkcolor=blue, anchorcolor=red,citecolor=green]{hyperref}
\usepackage{subfigure}

\textheight 23cm
\textwidth 16cm
\hoffset-1truecm
\voffset -2truecm

\def\Box{\vcenter{\vbox{\hrule\hbox{\vrule
     \vbox to 8.8pt{\hbox to 10pt{}\vfill}\vrule}\hrule}}}

\newcommand{\F}{\mathbb{F}}

\newtheorem{thm}{Theorem}[section]
\newtheorem{lemma}[thm]{Lemma}

\numberwithin{equation}{section}
\newtheorem{remark}[thm]{Remark}

\newcommand{\cL}{\mathcal L}

\newcommand{\cS}{\mathcal S}

\definecolor{Purple}{rgb}{0.5,0,0.5}

\def\Aut{{\rm Aut}}

  \def\Tr{{\rm Tr}}

\newcommand{\tup}{\textup}

\begin{document}
\newcommand{\stopthm}{\begin{flushright}
		\(\box \;\;\;\;\;\;\;\;\;\; \)
\end{flushright}}
\newcommand{\symfont}{\fam \mathfam}

\title{New families of flag-transitive linear spaces}

\date{}
\author[add1]{Tao Feng}\ead{tfeng@zju.edu.cn}
\author[add1]{Jianbing Lu\corref{cor1}}\ead{jianbinglu@zju.edu.cn}\cortext[cor1]{Corresponding author}
\address[add1]{School of Mathematical Sciences, Zhejiang University, Hangzhou 310027, Zhejiang, P.R. China}

\begin{abstract}
    In this paper, we construct new families of flag-transitive linear spaces with $q^{2n}$ points and $q^{2}$ points on each line that admit a one-dimensional affine automorphism group. We achieve this by building a natural connection with permutation polynomials of $\mathbb{F}_{q^{2}}$ of a particular form and following the scheme of Pauley and Bamberg in [A construction of one-dimensional affine flag-transitive linear spaces, Finite Fields Appl. 14 (2008) 537-548].
    \newline
	
	\noindent\text{Keywords:} Flag-transitive; Linear space; Permutation polynomial; Irreducible polynomial; $2$-design; $t$-spread.
	
	\noindent\text{Mathematics Subject Classification (2020)}: 05B25 11T06 12E05 51E05 51E23
\end{abstract}	

\maketitle

\section{Introduction}\label{introduction}

A  finite \emph{linear space} $\mathcal{L}$ is a point-line incidence structure with finitely many points such that any two points lie on exactly one common line, any point lies on  at least two lines and any line is incident  with at least two points. It is \emph{non-trivial} if some line has more than two points, and it is nondegenerate if it has four points no three of which are collinear. All the linear spaces in consideration are finite, nontrivial and nondegenerate. An automorphism of a linear space $\cL$ consists of a permutation of points and a permutation of lines that preserve incidence. We write $\Aut(\cL)$ for the full automorphism group of $\cL$, and call its subgroups as automorphism groups. A \emph{flag} of  $\mathcal{L}$ is an incident point-line pair. We say that an automorphism group $G$ is flag-transitive if it acts transitively on the flags of $\cL$. In such a case, each line has the same number of points which we denote by $k$, and $\cL$ forms a $2-(v,k,1)$ design with $v$ the total number of points of $\cL$.

There have been extensive works on the classification of finite flag-transitive linear spaces. Suppose that $\cL$ is such a linear space with a flag-transitive automorphism group $G$. By \cite{ref6} $G$ is point primitive, and by \cite{ref7} $G$ is either almost simple or of affine type. The classification in the almost simple case is accomplished in \cite{ref8}. In the case where $G$ is  of affine type, there is a point regular subgroup $T$ of order $p^d$ that is elementary abelian. We regard $T$ as an additive group and identify the points of $\cL$ with $T$. We have $G=TG_0$ with $G_0\le \Gamma\textup{L}_d(p)$. If $d\ge2$ and $G_0$ is not contained in $\Gamma\textup{L}_1(p^d)$, the classification is accomplished in \cite{ref9}. In the remaining case where $G_0$ is  a subgroup of $\Gamma\textup{L}_1(p^d)$, there are a variety of examples, cf. \cite{ref1,ref11,ref12,ref13}, but a complete classification is still out of reach.

In \cite{ref1}, the authors presented a method of deriving one-dimensional affine flag-transitive linear spaces where the input is an irreducible polynomial over $\F_{q^2}$ that satisfies certain property. In particular, they showed that there are flag-transitive linear space with $p^{2p}$ points and $p^{2}$ points on each line for each odd prime $p$. The purpose of the present paper is to build a connection with  permutation polynomials of the form $x^{r}h(x^{(q-1)/s})$. This will lead us to new flag-transitive linear spaces by making use of the known results on permutation polynomials.
\begin{thm}\label{main}
Let $q$ be a prime power and $d>1$ be an odd divisor of $q+1$. Let $u$ be a proper divisor of $d$, $t$ be a positive integer and set $n=d^{t}u$. Then there exist flag-transitive (non-Desarguesian) linear spaces with a one-dimensional affine automorphism group, and $q^{2n}$ points and $q^{2}$ points on each line.
\end{thm}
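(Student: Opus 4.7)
The plan is to follow the framework of Pauley--Bamberg in \cite{ref1} and reduce the construction of the desired flag-transitive linear space to the existence of a permutation polynomial of $\F_{q^2}$ of the prescribed form, which is then supplied by a specific construction using the arithmetic of $d$ and $u$.

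\textbf{Step 1: Set up the Pauley--Bamberg template.} Identify the point set of $\cL$ with the additive group $(\F_{q^{2n}}, +)$ and let the automorphism group be $T \rtimes G_0$, where $T$ acts by translations and $G_0 \leq \Gamma\textup{L}_1(q^{2n})$ is a one-dimensional affine group generated by multiplication by a primitive element $\omega \in \F_{q^{2n}}^{*}$ (together with a suitable field automorphism). The multiplicative subgroup $\F_{q^{2}}^{*} = \langle \omega^{(q^{2n}-1)/(q^2-1)} \rangle$ will serve as the ``line through $0$ minus $0$,'' and all other lines are obtained as translates and $G_0$-images. Flag-transitivity is automatic from the $G_0$-action; the nontrivial requirement is that two distinct points determine a unique line, i.e.\ that the incidence structure is actually a linear space.

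\textbf{Step 2: Convert the linear-space condition into a permutation-polynomial condition.} As in \cite{ref1}, the linear-space requirement translates into the bijectivity of a certain polynomial map naturally attached to $\omega$ and the chosen base line. I would show that, when $\omega$ is selected as the root of a well-chosen irreducible polynomial over $\F_{q^{2}}$, the map in question becomes a polynomial of the form
\[
f(x) \;=\; x^{r} \, h\!\left(x^{(q-1)/s}\right) \qquad \text{on } \F_{q^{2}},
\]
with exponents $r,s$ and auxiliary polynomial $h$ determined by $\omega$ and by the tower $\F_{q} \subset \F_{q^{2}} \subset \F_{q^{2n}}$. By the standard Park/Wan--Lidl/Zieve criterion, $f$ permutes $\F_{q^{2}}$ iff $\gcd(r,(q^{2}-1)/s) = 1$ together with a reduced map on the $s$-th roots of unity in $\F_{q^{2}}^{*}$ being a bijection.

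\textbf{Step 3: Exploit the hypotheses $d\mid q+1$ odd, $u\mid d$ proper, $n = d^{t}u$.} The divisibility $d \mid q+1$ ensures that $\F_{q^{2}}$ contains a primitive $d$-th root of unity lying in the norm-one subgroup $\{x \in \F_{q^{2}}^{*} : x^{q+1}=1\}$, and hence that polynomials involving $x^{(q-1)/s}$-type factors naturally interact with this cyclic subgroup. With $n = d^{t}u$, I would build the irreducible polynomial (and thus $h$) so that the reduced map on roots of unity has an iterated/telescoping structure of length $t$, with each iterate being governed by the divisor chain $u \mid d$. The odd-divisor hypothesis rules out the $2$-torsion obstruction in $\F_{q^{2}}^{*}$, while $u$ being a \emph{proper} divisor of $d$ ensures genuine shrinkage at each step so that the iterated map collapses to a verifiable bijection.

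\textbf{Main obstacle.} The delicate point is precisely Step 3: the explicit verification that the reduced permutation-polynomial criterion holds for the iterated map built from $n = d^{t}u$. A subsidiary difficulty is confirming non-Desarguesianness, but this should follow once the base line is exhibited as not being an $\F_{q^{2}}$-subspace --- equivalently, that $h$ is nonconstant --- which the construction will guarantee because $u$ is a proper divisor of $d$ (so $n > u$ and the tower is non-trivial).
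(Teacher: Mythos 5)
Your proposal identifies the right high-level strategy (the Pauley--Bamberg spread construction combined with permutation polynomials of the form $x^{r}h(x^{(q-1)/s})$), but it stops exactly where the actual proof begins, so there is a genuine gap. The entire content of the theorem is the explicit construction that you defer to ``Step 3'' and admit is the main obstacle: the paper takes $\delta\in\F_{q^2}^{*}$ of order $q+1$ and the concrete polynomial $g_{n}(x)=\bigl((\delta x-1)^{n}-\delta(x-\delta)^{n}\bigr)/(\delta^{n}-\delta)$, proves it lies in $\F_q[x]$ and is irreducible over $\F_{q^2}$ of degree $n$ (this is where the hypotheses $d\mid q+1$ odd, $u\mid d$ proper, $n=d^{t}u$ are really used, via a nontrivial number-theoretic lemma computing $\gcd(d^{t}(q+1),q^{i\cdot d^{t-1}}-1)$ and showing $\mathrm{ord}_{d^{t}(q+1)}(q)=2d^{t}$), and then observes that $x^{n}g_{n}(x^{q-1})$ permutes $\F_{q^2}$ not by any delicate iterated analysis of the reduced map on roots of unity, but immediately from Zieve's theorem (the paper's Lemma 2.3 with $k=0$, $\beta=\delta$), since $\gcd(n^2,q-1)=1$. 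Without an explicit candidate polynomial and a proof of its irreducibility, your outline cannot be completed as stated; the ``telescoping structure of length $t$'' is never made concrete.

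Two further inaccuracies would need repair even as an outline. First, your Step 1 takes the base line to be $\F_{q^2}$ (the orbit of the subfield under the full multiplicative group); that choice yields precisely the Desarguesian spread. The Pauley--Bamberg construction instead uses $\ell_b=\{x-bx^{q}:x\in\F_{q^2}\}$ with $b\notin\F_{q^2}$ and the orbit under the index-$(q+1)$ subgroup $C$, and non-Desarguesianness is equivalent to $\deg h>1$, not to ``$h$ nonconstant'' in your sense tied to $u<d$ (one has $n\geq d>1$ in all cases). Second, your Step 2 asserts that the linear-space condition is equivalent to bijectivity of the map $x^{r}h(x^{(q-1)/s})$, to be checked by the Wan--Lidl/Zieve criterion. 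In fact the relevant spread condition is the weaker projective condition (2.1) of the paper, and the logical link is one-directional: the paper's Lemma 3.1 shows that if $\gcd(n,q-1)=1$ and $x^{n}h(x^{q-1})$ is a permutation polynomial of $\F_{q^2}$, then condition (2.1) holds (by rescaling $y$ by $c\in\F_q^{*}$ with $c^{n}=\lambda$). Recasting your Step 2 as this sufficiency statement, and supplying the explicit $g_n$ together with its irreducibility proof, is what would be required to turn the sketch into a proof.
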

In characteristic $3$, we have an extra family from an irreducible polynomial of degree $3$ over $\F_{q^2}$.
\begin{thm}\label{second}
Let $q=3^{k}$. Then there exist flag-transitive (non-desarguesian) linear spaces with a one-dimensional affine automorphism group, and $q^{6}$ points and $q^{2}$ points on each line.
\end{thm}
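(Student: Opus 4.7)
The proof follows the Pauley--Bamberg scheme of \cite{ref1}, which takes an irreducible polynomial $f(x) \in \F_{q^2}[x]$ of degree $n$ satisfying a certain property and produces a one-dimensional affine flag-transitive linear space on $q^{2n}$ points with $q^2$ points per line. In this paper, that property has already been translated into the assertion that an associated polynomial is a permutation of $\F_{q^2}$. For Theorem \ref{second} we take $n = 3$ and $q = 3^{k}$, a case not reachable from Theorem \ref{main}: the latter requires $n = d^{t}u$ with $d>1$ an odd divisor of $q+1$, but $\gcd(3, q+1) = \gcd(3, 3^{k}+1) = 1$, so $d=3$ is impossible. A separate construction of a suitable irreducible cubic is therefore necessary.

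In characteristic $3$ the natural candidate is an Artin--Schreier polynomial $f(x) = x^{3} - x - c$ with $c \in \F_{q^{2}}$, which is irreducible over $\F_{q^{2}}$ if and only if $\Tr_{\F_{q^{2}}/\F_{3}}(c) \neq 0$. The $\F_{3}$-additivity of $x^{3} - x$ makes the permutation polynomial criterion tractable. First, I would specialize the Pauley--Bamberg condition to this cubic, obtaining an explicit polynomial $g(x) \in \F_{q^{2}}[x]$ whose bijectivity on $\F_{q^{2}}$ must be verified. Second, I would exhibit a concrete $c$ for which $f$ is irreducible and $g$ is a permutation of $\F_{q^{2}}$; since $\gcd(3, q^{2}-1) = 1$, the cubing map on $\F_{q^{2}}$ is already a bijection, which should reduce the test on $g$ to an $\F_{q}$-linear condition or an Artin--Schreier equation with no nonzero solutions, both settleable by elementary means.

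Third, once these conditions are in place the Pauley--Bamberg construction yields the desired flag-transitive linear space; non-Desarguesian-ness follows because a Desarguesian affine space on $q^{6}$ points would admit an automorphism group much larger than the one-dimensional affine group realized here. The main obstacle is the verification of the permutation property for $g$, which is specific to characteristic $3$ and cannot be read off from the results invoked in Theorem \ref{main}; it will require a direct argument exploiting both the Artin--Schreier structure of $x^{3} - x$ and the action of the Frobenius $x \mapsto x^{q}$ on $\F_{q^{2}}$.
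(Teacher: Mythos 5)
There is a genuine gap: your proof is a plan whose decisive step is exactly the part you defer (``the main obstacle is the verification of the permutation property for $g$''), and the specific family you propose cannot supply that step. For $q^6$ points the Pauley--Bamberg condition (\ref{permutate}) is applied with $m=d=3$, so for your cubic $f(x)=x^3-x-c$ one must check that $g(x):=x^3f(x^{q-1})=x^{3q}-x^{q+2}-cx^3$ satisfies (\ref{permutate}) (being a permutation of $\F_{q^2}$ would suffice, by Lemma \ref{coprime}, since $\gcd(3,q-1)=1$). But already for $q=3$ this fails for \emph{every} admissible $c$. Write $\F_9=\F_3(i)$ with $i^2=-1$; irreducibility of $x^3-x-c$ over $\F_9$ forces $\Tr_{\F_9/\F_3}(c)\neq 0$, leaving six candidates $c$. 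Take $c=1$: on $\F_9$ one has $g(x)=x^9-x^5-x^3=x-x^5-x^3$, and since $(1+i)^3=1-i$, $(1+i)^5=-(1+i)$, one computes $g(1+i)=1$ while $g(1)=-1$; thus $g(1+i)/g(1)=-1\in\F_3$ although $(1+i)/1\notin\F_3$, violating (\ref{permutate}) (and in particular $g$ is not a permutation). A short exhaustive check of the remaining five values of $c$ with nonzero trace gives the same kind of coset collision, so no Artin--Schreier cubic works when $k=1$. Since the theorem asserts existence for all $q=3^k$, including $q=3$, your route cannot prove the statement as written; the $\F_3$-additivity of $x^3-x$ does not help because the relevant map $x\mapsto x^{3q}-x^{q+2}-cx^3$ contains the non-additive term $x^{q+2}$, so the ``elementary linear-algebra reduction'' you anticipate does not materialize.

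For comparison, the paper avoids any direct verification by choosing a different cubic, $h(x)=x^3+ax^2-ax+1$ with $a\in\F_q^{*}$ and $\Tr_{3^k/3}(a^{-1})\neq 0$: irreducibility over $\F_{q^2}$ is reduced, via the substitutions $x\mapsto x-1$ and $x\mapsto 1/x-1$, to the root-free criterion for $x^3-x-a^{-1}$ (Lemma \ref{cubic}), and the permutation property of $x^3h(x^{q-1})=x^{3q}+ax^{2q+1}-ax^{q+2}+x^3$ is precisely Gupta's permutation-quadrinomial theorem (Lemma \ref{quadripp}, case $c=1$), after which Lemma \ref{coprime} and Lemma \ref{spread} finish the argument. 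So the characteristic-$3$ input is a known nontrivial quadrinomial result, not an Artin--Schreier computation. Your other steps (why Theorem \ref{main} cannot reach $n=3$ in characteristic $3$, and the general scheme) are fine, though the non-Desarguesian claim should be justified as in the paper by Lemma \ref{spread} ($b\notin\F_{q^2}$ because $h$ has degree $3$) rather than by an appeal to the size of the automorphism group.
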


The proofs of Theorem \ref{main} and Theorem \ref{second} are given in Section \ref{s3} and Section \ref{s4}, using some preliminary results about permutation polynomials presented in Section \ref{s2}. In \cite{ref1}, the authors described a procedure called inflation to obtain new linear spaces over an extension field with the same irreducible polynomial, which generalizes a similar procedure in \cite{ref12}. The inequivalence of our new linear spaces with the known ones and the fact that they do not arise from inflation can be established by following the same lines as in \cite{ref1}.

The paper is organized as follows. In Section \ref{s2}, we present some preliminary results on linear spaces and permutation polynomials. We present the proof of Theorem \ref{main} in Section \ref{s3} and the proof of Theorem \ref{second} in Section \ref{s4}. We consider the equivalence issue in Section \ref{s5}.

\section{Preliminaries}\label{s2}

\subsection{$k$-spreads and flag-transitive linear spaces}\label{s21}

Let $V$ be an $n$-dimensional vector space over a finite field $\mathbb{F}_{q}$. A $k$-spread of $V$ is a set of $(k+1)$-dimensional subspaces of $V$ which partition the nonzero vectors. A $k$-spread exists if and only if $k+1\mid n$. For instance, if $k+1$ divides $n$ and  we regard $V'=\mathbb{F}_{q^{k+1}}^{n/(k+1)}$ as a $n$-dimensional vector space $V$ over $\mathbb{F}_q$, then the $1$-dimensional $\mathbb{F}_{q^{k+1}}$-subspaces of $V'$ form the Desarguesian $k$-spread of $V$. We say that a $k$-spread $\mathcal{S}$ is transitive if the stabiliser of $\mathcal{S}$ in $\Gamma \tup{L}_{n}(q)$ acts transitively on the elements of $\mathcal{S}$. The  Andr{\'e}/Bruck-Bose construction (see \cite{ref14,ref16}) gives a method to produce a linear space from a $k$-spread $\mathcal{S}$ of a vector space $V$ as follows: the points of the linear space are the vectors of $V$ and the lines  are  the sets $S+v$, where $S\in\mathcal{S}$ and $v\in V$. The resulting linear space is flag-transitive if and only if $\cS$ is transitive. Moreover, a flag-transitive linear space whose automorphism group is of affine type must arise in this way.  By \cite{ref11}, a  linear space $\cL$ with a flag-transitive automorphism group $G$ is either a Desarguesian affine space, a Hering plane of order $27$, one of two sporadic linear spaces constructed by Hering that are not planes \cite{ref10}, or $G$ is a subgroup of $\textup{A}\Gamma\textup{L}_{1}(p^{n})$ with $p$ prime.

In \cite{ref1}, Pauley and Bamberg gave a construction of one-dimensional affine flag-transitive linear spaces via the Andr{\'e}/Bruck-Bose construction applied to transitive $1$-spreads of vector space. To be specific, we take $V=\mathbb{F}_{q^{2m}}$ and $b\in V$, set $\ell_b=\{x-bx^{q}:\,x\in\mathbb{F}_{q^2}\}$ with $b^{q+1}\ne 1$, and let $C$ be the subgroup of $\Gamma\textup{L}(V)$ that consists of the linear transformations $x\mapsto ux$ of $V$ with $u^{(q^{2m}-1)/(q+1)}=1$. The next result describes the necessary and sufficient conditions for $\ell_b^C$ to form a transitive $1$-spread.
\begin{lemma}\label{spread}
\cite[Theorem 1]{ref1} Take notation as above,  let $h(x)$ be the minimal polynomial of $b$ over $\mathbb{F}_{q^{2}}$, and let $d$ be the degree of $h(x)$. Then $\ell_b^C$ is a transitive $1$-spread of $V$ if and only if for any nonzero $x,y\in\mathbb{F}_{q^{2}}$ we have that
\begin{equation}\label{permutate}
\frac{x^{m}h(x^{q-1})^{m/d}}{y^{m}h(y^{q-1})^{m/d}}\in\mathbb{F}_{q}\text{\quad implies that} \quad\frac{x}{y}\in \mathbb{F}_{q}.
\end{equation}
Moreover, $\ell_b^C$ is Desarguesian if and only if $b$ is in $\F_{q^2}$.
\end{lemma}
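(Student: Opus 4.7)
My plan is to reduce the spread property to the stated polynomial condition in three steps: (i) a geometric normalisation isolating a stabiliser condition, (ii) an algebraic parametrisation of the intersection $u\ell_b\cap\ell_b$, and (iii) a character/norm calculation turning $u\in C$ into the polynomial identity.

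First I would check that $\ell_b$ is a $2$-dimensional $\mathbb{F}_q$-subspace of $V$: the kernel of the $\mathbb{F}_q$-linear map $x\mapsto x-bx^q$ on $\mathbb{F}_{q^2}$ consists of $x$ with $b=x^{1-q}\in\mu_{q+1}$, excluded by $b^{q+1}\neq 1$. The group $C$ has order $(q^{2m}-1)/(q+1)$, is semiregular on $V\setminus\{0\}$, and contains $\mathbb{F}_q^*$, which stabilises $\ell_b$. Transitivity of $\ell_b^C$ is automatic since $\ell_b^C$ is the $C$-orbit of $\ell_b$; a straightforward count shows that $\ell_b^C$ is a $1$-spread iff $|\ell_b^C|=(q^{2m}-1)/(q^2-1)$ and the images are pairwise disjoint, which collapses to the single criterion: for every $u\in C$, $u\ell_b\cap\ell_b\neq 0$ implies $u\in\mathbb{F}_q^*$. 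Writing $\ell_b=\{x(1-bx^{q-1}):x\in\mathbb{F}_{q^2}\}$, the intersection condition is equivalent to $u=x(1-bx^{q-1})/[y(1-by^{q-1})]$ for some $x,y\in\mathbb{F}_{q^2}^*$; and when $b\notin\mathbb{F}_{q^2}$, substituting $u=c\in\mathbb{F}_q^*$ and expanding $x-cy=b(x^q-cy^q)$ forces $x=cy$ (else $b\in\mathbb{F}_{q^2}$), so $u\in\mathbb{F}_q^*$ is equivalent to $x/y\in\mathbb{F}_q$.

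The main step is to express $u\in C$ (equivalently $u^N=1$ with $N=(q^{2m}-1)/(q+1)$) in the polynomial form of the lemma. Since $\mathbb{F}_{q^2}(b)=\mathbb{F}_{q^{2d}}$ and $d\mid m$, each factor $1-bx^{q-1}$ lies in $\mathbb{F}_{q^{2d}}^*$; factoring $N=N_d\cdot S$ with $N_d=(q^{2d}-1)/(q+1)$ and $S=\sum_{j=0}^{m/d-1}q^{2dj}\equiv m/d\pmod{q^2-1}$ yields $\eta^N=\mathrm{Nm}_{\mathbb{F}_{q^{2d}}/\mathbb{F}_{q^2}}(\eta)^{(q-1)m/d}$ for $\eta\in\mathbb{F}_{q^{2d}}^*$. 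Applying this to $\eta=1-bx^{q-1}$, expanding the norm as $\prod_{i=0}^{d-1}(1-b^{q^{2i}}x^{q-1})$, using the reciprocal-polynomial identity $\prod_i(1-\beta_i w)=w^d h(1/w)$, and absorbing $N\equiv m(q-1)\pmod{q^2-1}$ for the $\mathbb{F}_{q^2}$-factors, I expect to arrive at
\[
u^N=\left[\frac{x^m h(x^{q-1})^{m/d}}{y^m h(y^{q-1})^{m/d}}\right]^{q-1},
\]
so $u^N=1$ holds iff the bracketed ratio lies in $\mathbb{F}_q^*$. The main technical obstacle is keeping the reciprocal-polynomial identity clean so that $h(x^{q-1})$ appears with exactly the stated power of $x$, rather than the ``dual'' expression $h(x^{1-q})$. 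Finally, for the moreover clause, if $b\in\mathbb{F}_{q^2}$ then $\ell_b$ is $\mathbb{F}_{q^2}$-invariant and $\ell_b^C$ coincides with the Desarguesian $1$-spread of $V$; conversely the Desarguesian assumption forces $b\in\mathbb{F}_{q^2}$.
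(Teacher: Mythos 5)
First, note that the paper does not prove this lemma at all: it is quoted verbatim from \cite[Theorem 1]{ref1} (Pauley--Bamberg), so there is no internal proof to compare against; your argument has to stand on its own. Its skeleton is sound and is essentially the standard one: the reduction of the spread property to ``$u\in C$ and $u\ell_b\cap\ell_b\neq 0$ imply $u\in\mathbb{F}_q^*$'' is correct (the counting argument does collapse to this single criterion), the parametrisation $u=\frac{x(1-bx^{q-1})}{y(1-by^{q-1})}$ is right, and the equivalence ``$u\in\mathbb{F}_q^*\Leftrightarrow x/y\in\mathbb{F}_q$'' is correct --- though you should derive the contradiction from $b^{q+1}\neq 1$ (since $x-cy\neq0$ forces $b=(x-cy)^{1-q}\in\mu_{q+1}$) rather than from $b\notin\mathbb{F}_{q^2}$, because the lemma also covers $d=1$, i.e.\ $b\in\mathbb{F}_{q^2}$ with $b^{q+1}\neq1$. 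The norm computation $x^N=x^{m(q-1)}$ for $x\in\mathbb{F}_{q^2}^*$ and $\eta^N=\mathrm{Nm}_{\mathbb{F}_{q^{2d}}/\mathbb{F}_{q^2}}(\eta)^{(q-1)m/d}$ for $\eta\in\mathbb{F}_{q^{2d}}^*$ is also correct.

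The genuine gap is exactly the step you flag and leave unresolved, and as stated your target identity is false. Carrying the computation through, $\mathrm{Nm}_{\mathbb{F}_{q^{2d}}/\mathbb{F}_{q^2}}(1-bx^{q-1})=x^{d(q-1)}h(x^{1-q})$, and since $x^{1-q}=(x^q)^{q-1}$ one gets
\[
u^N=\left[\frac{F(x^q)}{F(y^q)}\right]^{q-1},\qquad F(z):=z^{m}h(z^{q-1})^{m/d},
\]
not $u^N=\bigl[F(x)/F(y)\bigr]^{q-1}$. These genuinely differ: already for $d=1$, $h(T)=T-b$, $y=1$, the quotient of the two bracketed ratios is $\bigl(\frac{x-bx^q}{x^q-bx}\bigr)^{m}$, which lies in $\mathbb{F}_q$ only when $(x^{2(q-1)}-1)(1-b^{q+1})=0$, so no amount of massaging the reciprocal-polynomial identity will produce $h(x^{q-1})$ with ``exactly the stated power of $x$.'' The repair, however, is one line: the map $x\mapsto x^q$ is a bijection of $\mathbb{F}_{q^2}^*$ and $x/y\in\mathbb{F}_q\Leftrightarrow x^q/y^q\in\mathbb{F}_q$, so the criterion ``$F(x^q)/F(y^q)\in\mathbb{F}_q\Rightarrow x/y\in\mathbb{F}_q$ for all nonzero $x,y$'' is equivalent to condition \eqref{permutate}; with this observation inserted your proof goes through. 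Finally, the ``moreover'' clause is only asserted: if $b\in\mathbb{F}_{q^2}$ then $\ell_b=\mathbb{F}_{q^2}$ and the orbit consists of $\mathbb{F}_{q^2}$-subspaces (so it is Desarguesian \emph{provided} it is a spread --- for $\gcd(m,q+1)>1$ the orbit is too short to be one), and the converse direction (Desarguesian forces $\ell_b$ to be an $\mathbb{F}_{q^2}$-subspace, whence $b\in\mathbb{F}_{q^2}$) still needs an actual argument.
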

The authors show that in the case $q=p$ is an odd prime and $m=p$, the polynomial
\begin{equation}\label{eqn_PBexamp}
h(x)=\frac{x^{p+1}-1}{x-1}-2=x^{p}+x^{p-1}+\cdot\cdot\cdot+x-1
\end{equation}
is irreducible over $\mathbb{F}_{p^{2}}$ and satisfies condition \eqref{permutate}. Correspondingly there is a new flag-transitive linear space with $p^{2p}$ points and $p^{2}$ points on each line for each odd prime $p$.

\subsection{Permutation polynomials of the form $x^{r}h(x^{(q-1)/s})$}

A polynomial $f(x)\in\mathbb{F}_{q}[x]$ is a permutation polynomial of $\mathbb{F}_{q}$ if the associated  function $f:\,c\mapsto f(c)$ of $\mathbb{F}_{q}$  is a permutation of $\mathbb{F}_{q}$. Wan and Lidl \cite{ref15} initiated the systematic study of permutation polynomials of $\mathbb{F}_{q}$ of the form $x^{r}h(x^{(q-1)/s})$, where $s$ divides $q-1$, $r>0$ and $h(x)\in\mathbb{F}_{q}[x]$. A criterion for a polynomial in such form to be a permutation polynomial was given in \cite{ref15}.  There have been extensive studies on permutation polynomials of this form in recent years, and we refer the interested reader to the comprehensive survey \cite{ref5}. We shall need the following two results in the sequel.
\begin{lemma}\label{tripp}\cite[Theorem 1.2]{ref2} Let $q$ be a prime power, let $d,k$ be integers with $d>0$ and $k\geq 0$, and let $\beta,\delta\in\mathbb{F}_{q^{2}}$ satisfy $\beta^{q+1}=1$ and $\delta\notin\mathbb{F}_{q}$. Then
$$f(x):=x^{d+k(q+1)}\cdot\left(\left(\delta x^{q-1}-\beta\delta^{q}\right)^{d}-\delta\left(x^{q-1}-\beta\right)^{d}\right)$$
permutes $\mathbb{F}_{q^{2}}$ if and only if $\gcd(d(d+2k),q-1)=1$.
\end{lemma}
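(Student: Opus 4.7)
My plan is to apply the standard criterion of Wan--Lidl / Park--Lee / Zieve for polynomials of the form $F(x)=x^{r}h(x^{q-1})$ over $\mathbb{F}_{q^{2}}$: assuming $h$ is nonvanishing on the norm-one subgroup $\mu_{q+1}:=\{y\in\mathbb{F}_{q^{2}}^{\ast}:y^{q+1}=1\}$, $F$ permutes $\mathbb{F}_{q^{2}}$ if and only if (a) $\gcd(r,q-1)=1$, and (b) the induced map $\bar{F}:y\mapsto y^{r}h(y)^{q-1}$ permutes $\mu_{q+1}$. First I rewrite
\[
f(x)=x^{d+k(q+1)}\,g(x^{q-1}),\qquad g(y):=(\delta y-\beta\delta^{q})^{d}-\delta(y-\beta)^{d},
\]
so that $r=d+k(q+1)$ and $h=g$. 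Since $q+1\equiv 2\pmod{q-1}$, condition (a) immediately reduces to $\gcd(d+2k,q-1)=1$.

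The crux is condition (b), and it rests on the following observation. The M\"obius transformation
\[
\phi(y):=\frac{\delta y-\beta\delta^{q}}{y-\beta}
\]
satisfies $\phi(y)^{q}=\phi(y)$ for every $y\in\mu_{q+1}$, as a short computation using $y^{q}=y^{-1}$ and $\beta^{q}=\beta^{-1}$ shows. Hence $\phi$ restricts to a bijection $\mu_{q+1}\to\mathbb{F}_{q}\cup\{\infty\}$, both sides having size $q+1$. Because $\delta\notin\mathbb{F}_{q}$, this forces $\phi(y)^{d}\neq\delta$ for every $y\in\mu_{q+1}$, so $g(y)=(y-\beta)^{d}(\phi(y)^{d}-\delta)\neq 0$, verifying the nonvanishing hypothesis.

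Next I would compute $g(y)^{q-1}$ for $y\in\mu_{q+1}$ using the factorization just noted together with $y^{q}=y^{-1}$ and $\beta^{q}=\beta^{-1}$; the result simplifies to
\[
y^{d}\,g(y)^{q-1}=(-1)^{d}\beta^{-d}\cdot\frac{\phi(y)^{d}-\delta^{q}}{\phi(y)^{d}-\delta}.
\]
Since $y^{k(q+1)}=1$ on $\mu_{q+1}$, the map $\bar{f}$ in (b) therefore factors as
\[
\mu_{q+1}\;\xrightarrow{\phi}\;\mathbb{F}_{q}\cup\{\infty\}\;\xrightarrow{z\mapsto z^{d}}\;\mathbb{F}_{q}\cup\{\infty\}\;\xrightarrow{\psi}\;\mu_{q+1},
\]
where $\psi(z):=(-1)^{d}\beta^{-d}(z-\delta^{q})/(z-\delta)$ does send $\mathbb{F}_{q}\cup\{\infty\}$ into $\mu_{q+1}$ by a $(q+1)$-th power check entirely analogous to the one for $\phi$. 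Both $\phi$ and $\psi$ are bijections, so condition (b) is equivalent to $z\mapsto z^{d}$ permuting $\mathbb{F}_{q}\cup\{\infty\}$, i.e.\ $\gcd(d,q-1)=1$. Combining (a) and (b) yields the stated condition $\gcd(d(d+2k),q-1)=1$.

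The only real obstacle is spotting the M\"obius substitution $\phi$ and proving $\phi(\mu_{q+1})\subseteq\mathbb{F}_{q}\cup\{\infty\}$; once this identification is in hand the argument becomes routine bookkeeping with Frobenius conjugates. This fits a common recipe by which permutation problems for $x^{r}h(x^{q-1})$ on $\mathbb{F}_{q^{2}}$ are translated into elementary power-map questions on $\mathbb{F}_{q}\cup\{\infty\}$.
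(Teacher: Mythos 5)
Your proposal is correct, but note that the paper gives no proof of this lemma at all: it is quoted verbatim from Zieve's Theorem 1.2 (reference [ref2]), and your argument is essentially Zieve's own --- reduce via the Wan--Lidl/Park--Lee criterion to the induced map on $\mu_{q+1}$, then conjugate by the degree-one (R\'edei) bijections $\phi:\mu_{q+1}\to\mathbb{F}_{q}\cup\{\infty\}$ and $\psi:\mathbb{F}_{q}\cup\{\infty\}\to\mu_{q+1}$ so that the question becomes whether $z\mapsto z^{d}$ permutes $\mathbb{F}_{q}\cup\{\infty\}$, i.e.\ $\gcd(d,q-1)=1$, while $\gcd(r,q-1)=\gcd(d+2k,q-1)$ handles the other factor. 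The only cosmetic point is the fibre $y=\beta$, where the factorization $g(y)=(y-\beta)^{d}(\phi(y)^{d}-\delta)$ and the formula for $y^{d}g(y)^{q-1}$ must be read with the usual conventions at $\infty$ (or checked directly, giving $(-1)^{d}\beta^{-d}$), which is consistent with your composition-of-bijections argument.
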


\begin{lemma}\label{quadripp}
\cite[Theorem 3.2]{ref3} Let $q=3^{k}$. Then for $a,c\in\mathbb{F}_{q}^{*}$, the quadrinomial
\[
f(x):=x^{3}+ax^{q+2}-ax^{2q+1}+cx^{3q}
\]
permutes $\mathbb{F}_{q^{2}}$ if any one of the following conditions is satisfied: (1) $c=a\neq-1$ and $a^{(q-1)/2}=1$; (2) $c=a-1$ and $(-a)^{(q-1)/2}=1$; (3) $c=1-a, a\neq-1$ and $k$ is even; (4) $c=1$.
\end{lemma}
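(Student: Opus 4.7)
The plan is to apply the standard Park--Lee--Zieve criterion for permutation polynomials of the form $x^{r}H(x^{(q^{2}-1)/s})$ over $\mathbb{F}_{q^{2}}$. First I would observe that
\[
f(x)=x^{3}\,H(x^{q-1}),\qquad H(y)=cy^{3}-ay^{2}+ay+1\in\mathbb{F}_{q}[y],
\]
so we are in the desired form with $r=3$, $s=q+1$, and $(q^{2}-1)/s=q-1$. The criterion then reduces the claim to three points: (a) $\gcd(3,q-1)=1$, which is automatic since $q=3^{k}$; (b) $H(\zeta)\neq 0$ for every $\zeta\in\mu_{q+1}$; and (c) the induced map $g(\zeta):=\zeta^{3}H(\zeta)^{q-1}$ is a bijection of $\mu_{q+1}$.

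Next I would simplify the shape of $g$ on $\mu_{q+1}$. Using $\zeta^{q}=\zeta^{-1}$ together with $a,c\in\mathbb{F}_{q}$, a short calculation gives $H(\zeta)^{q}=\zeta^{-3}\bigl(\zeta^{3}+a\zeta^{2}-a\zeta+c\bigr)$, hence
\[
g(\zeta)=\frac{N(\zeta)}{H(\zeta)},\qquad N(\zeta):=\zeta^{3}+a\zeta^{2}-a\zeta+c,
\]
whenever $H(\zeta)\neq 0$. The problem reduces to verifying, separately for each of the four parameter families, that $H$ has no zero on $\mu_{q+1}$ and that $N(\zeta)H(\eta)=N(\eta)H(\zeta)$ with $\zeta,\eta\in\mu_{q+1}$ forces $\zeta=\eta$.

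I would then handle the four cases by tailored manipulations. Case~(4), $c=1$, is the cleanest: the characteristic-$3$ identity $\zeta^{2}-\zeta+1=(\zeta+1)^{2}$ yields
\[
g(\zeta)+1=-\frac{(\zeta+1)^{3}}{H(\zeta)},\qquad g(\zeta)-1=-\frac{a\zeta(\zeta-1)}{H(\zeta)},
\]
so $(g(\zeta)+1)/(g(\zeta)-1)=(\zeta+1)^{3}/(a\zeta(\zeta-1))$, and injectivity of $g$ reduces to injectivity of the right-hand side on $\mu_{q+1}$. For cases~(1)--(3), after clearing denominators in $g(\zeta)=g(\eta)$ and extracting the trivial factor $\zeta-\eta$, one obtains a symmetric polynomial identity in $u=\zeta\eta$ and $v=\zeta+\eta$. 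The hypothesis that $a$ is a nonzero square (case~1), that $-a$ is a nonzero square (case~2), or that $-1$ is a square in $\mathbb{F}_{q}$, i.e.\ $q\equiv 1\pmod 4$ (case~3, via $k$ even), then provides the algebraic information needed to rule out non-trivial solutions of this identity satisfying the constraints $u\in\mu_{q+1}$ and $v^{q}=v/u$ imposed by $\zeta,\eta\in\mu_{q+1}$.

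The main obstacle is step~(c): the four families are not handled uniformly, and each requires translating $g(\zeta)=g(\eta)$ into a Diophantine identity on $\mu_{q+1}$ and using the specific square-hypothesis on $a$, $-a$, or $-1$ (together with $a\ne -1$ in cases~(1) and~(3), and $c\ne 0$ throughout) to kill the non-trivial solutions. Step~(b), the non-vanishing of $H$ on $\mu_{q+1}$, is handled by a similar but easier argument: a root $\zeta\in\mu_{q+1}$ of $H$ would give the polynomial identity $H(\zeta)=0=\zeta^{3}H(\zeta^{-1})$, i.e.\ $\zeta$ and $\zeta^{-1}$ are both roots of $H$, and each of the four parameter conditions rules this out by a direct check.
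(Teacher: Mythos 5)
You should first note that the paper contains no proof of this lemma at all: it is imported verbatim from Gupta \cite[Theorem 3.2]{ref3}, so the only meaningful comparison is with that source's method. Your reduction is the right one and is in fact the method used there: writing $f(x)=x^{3}H(x^{q-1})$ with $H(y)=cy^{3}-ay^{2}+ay+1$, invoking the multiplicative criterion of Wan--Lidl/Park--Lee/Zieve with $r=3$, $s=q+1$, noting $\gcd(3,q-1)=1$ for $q=3^{k}$, and reducing to bijectivity of $g(\zeta)=\zeta^{3}H(\zeta)^{q-1}$ on $\mu_{q+1}$. Your simplification $g(\zeta)=N(\zeta)/H(\zeta)$ with $N(\zeta)=\zeta^{3}H(\zeta^{-1})=\zeta^{3}+a\zeta^{2}-a\zeta+c$ checks out, as does the case-(4) computation: with $c=1$ one has $N+H=-(\zeta+1)^{3}$ and $N-H=-a\zeta(\zeta-1)$ in characteristic $3$, so $g$ is a M\"obius transform of $(\zeta+1)^{3}/\bigl(a\zeta(\zeta-1)\bigr)$ and injectivity transfers.

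As a standalone proof, however, there is a genuine gap: for cases (1)--(3) the decisive step is never executed. Saying that clearing denominators in $g(\zeta)=g(\eta)$, removing the factor $\zeta-\eta$, and passing to $u=\zeta\eta$, $v=\zeta+\eta$ (with the side conditions $u\in\mu_{q+1}$, $v^{q}=v/u$) will let the square hypotheses on $a$, $-a$, or $-1$ ``provide the algebraic information needed'' is a statement of intent, not an argument; essentially all of the content of Gupta's Theorem 3.2 lives exactly there, and it is not routine --- the resulting symmetric identity defines a genuine curve whose lack of points on the relevant torus must be ruled out case by case via explicit quadratic-character computations. The non-vanishing of $H$ on $\mu_{q+1}$ is likewise only gestured at: your observation that $H(\zeta)=0$ forces $N(\zeta)=0$, i.e.\ $H(\zeta^{-1})=0$, is correct, and the self-inverse points are easily excluded (in characteristic $3$, $H(1)=c+1$ and $H(-1)=1+a-c$, both nonzero under each of the four hypotheses), but when $\zeta\neq\zeta^{-1}$ you must still use that the third root of the cubic is $-c^{-1}\in\mathbb{F}_{q}^{*}$ and derive a contradiction from the parameter conditions, none of which is carried out. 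So your framework is sound and matches the cited source, but the lemma itself is not actually proved by the proposal.
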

We observe that the function $f(x)$ in Lemma \ref{quadripp} is of the form $f(x)=x^{3}h(x^{q-1})$, where $h(x)=cx^{3}-ax^{2}+ax+1$.

\section{Proof of Theorem \ref{main}}\label{s3}
Our key observation is the following lemma which builds a connection between the condition (\ref{permutate}) in Lemma \ref{spread} and permutation polynomials.
\begin{lemma}\label{coprime}
Let $q$ be a prime power and $d$ be a positive integer such that $\gcd(d,q-1)=1$. If $x^{d}h(x^{q-1})$ is a permutation polynomial of $\mathbb{F}_{q^{2}}$, then for all nonzero $x,y\in\mathbb{F}_{q^{2}}$ we have that
\[
\frac{x^{d}h(x^{q-1})}{y^{d}h(y^{q-1})}\in\mathbb{F}_{q}\text{\quad implies that} \quad\frac{x}{y}\in \mathbb{F}_{q}.
\]
\end{lemma}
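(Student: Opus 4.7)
The plan is to exploit the permutation property directly, using the coprimality of $d$ with $q-1$ to find the right scalar that relates $x$ and $y$.

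Fix nonzero $x,y\in\F_{q^2}$ and assume $\lambda:=x^d h(x^{q-1})/(y^d h(y^{q-1}))\in\F_q$. First I would note that this ratio is a well-defined nonzero element of $\F_q^*$: since $f(x):=x^d h(x^{q-1})$ has $x^d$ as a factor, $f(0)=0$, and because $f$ is a permutation polynomial, $f(t)=0$ forces $t=0$; hence $f(x), f(y)$ are both nonzero, and so is their ratio.

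Next, the hypothesis $\gcd(d,q-1)=1$ means the map $a\mapsto a^d$ is a bijection of $\F_q^*$, so there exists a (unique) $\mu\in\F_q^*$ with $\mu^d=\lambda$. The key computation is then to evaluate $f(\mu y)$: using $\mu\in\F_q^*$, we have $\mu^{q-1}=1$, hence
\[
f(\mu y)=(\mu y)^d h\bigl((\mu y)^{q-1}\bigr)=\mu^d y^d h\bigl(\mu^{q-1}y^{q-1}\bigr)=\mu^d y^d h(y^{q-1})=\lambda f(y)=f(x).
\]

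Finally, since $f$ is a permutation of $\F_{q^2}$, the equality $f(\mu y)=f(x)$ forces $x=\mu y$, whence $x/y=\mu\in\F_q$, which is exactly the desired conclusion.

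There is essentially no real obstacle here: once one spots that $\mu^{q-1}=1$ collapses the inner argument $h((\mu y)^{q-1})$ to $h(y^{q-1})$, the whole argument is a one-line application of injectivity of $f$. The only nontrivial ingredient is the use of $\gcd(d,q-1)=1$ to extract a $d$-th root of $\lambda$ within $\F_q^*$; without this coprimality assumption, $\lambda$ might fail to be a $d$-th power in $\F_q^*$ and the construction of the required scalar $\mu$ would break down.
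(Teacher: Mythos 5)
Your proof is correct and follows essentially the same route as the paper: extract a $d$-th root $\mu\in\F_q^*$ of the ratio using $\gcd(d,q-1)=1$, observe that $\mu^{q-1}=1$ makes $f(\mu y)=f(x)$, and conclude $x=\mu y$ by injectivity of $f$. Your extra remark that $f(y)\neq 0$ (so the ratio lies in $\F_q^*$) is a small point the paper leaves implicit, but otherwise the two arguments coincide.
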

\begin{proof}
Suppose $x^{d}h(x^{q-1})=\lambda y^{d}h(y^{q-1})$ for some $\lambda\in\mathbb{F}_{q}^{\ast}$ and $x,y\in\mathbb{F}_{q^{2}}^*$. Since $\gcd(d,q-1)=1$, there exists $c\in\mathbb{F}_{q}^{\ast}$ such that $\lambda=c^{d}$. Set $z=cy$. Then $z^{d}=c^{d}y^{d}=\lambda y^{d}$ and $h(z^{q-1})=h(c^{q-1}y^{q-1})=h(y^{q-1})$. Therefore, we have $x^{d}h(x^{q-1})=z^{d}h(z^{q-1})$, which leads to $x=z=cy$ by the fact that $x^{d}h(x^{q-1})$ is a permutation polynomial of $\mathbb{F}_{q^{2}}$. This completes the proof.
\end{proof}

\begin{remark}
We now take a look at the polynomial $h(x)$ in \eqref{eqn_PBexamp}. For $x\in\mathbb{F}_{p^2}$, we have
\[
x^{p}h(x^{p-1})=
\begin{cases}
-2x^{p},\;&\text{if }x\not\in\mathbb{F}_p^*,\\
-x,\;&\text{if }x\in\mathbb{F}_p^*.
\end{cases}
\]
It is routine to check that $x^{p}h(x^{p-1})$ permutes $\mathbb{F}_{p^{2}}$, and so falls in the category of Lemma \ref{coprime}.
\end{remark}

\begin{lemma}\label{gcd}
Let $q$ be a prime power and $d>1$ be an odd divisor of $q+1$. Let $i>0$ be an even integer. Then
\begin{equation}\label{eqn_gcd}
\gcd\left(d^{t}(q+1), q^{i\cdot d^{t-1}}-1\right)=(q+1)\cdot d^{t-1}\cdot\gcd(d,i)
\end{equation}
for any positive integer $t$. Furthermore, let $N=d^{t}(q+1)$ and $\tup{ord}_{N}(q)$ be the order of $q$ in the multiplicative group $\mathbb{Z}_{N}^{\ast}$ of the residue ring $\mathbb{Z}_{N}$. Then $\tup{ord}_{N}(q)=2d^{t}$.
\end{lemma}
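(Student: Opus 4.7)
The plan is to verify both assertions by a prime-by-prime analysis of the relevant $p$-adic valuations, with the Lifting the Exponent (LTE) lemma doing the real work.

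For the gcd identity (\ref{eqn_gcd}), I would fix a prime $p$ and compare $v_p$ on each side. When $p\nmid d$, the factor $d^t$ contributes nothing, so the claim reduces to $v_p(q+1)\le v_p(q^{id^{t-1}}-1)$, which holds because $id^{t-1}$ is even and $q+1\mid q^2-1\mid q^{id^{t-1}}-1$. When $p\mid d$, the prime $p$ is odd (since $d$ is odd) and $p\mid q+1$, which in particular forces $p\nmid q-1$. Writing $q^{id^{t-1}}-1=(q^2)^{id^{t-1}/2}-1$ with $p\mid q^2-1$, LTE yields
\[
v_p(q^{id^{t-1}}-1)=v_p(q^2-1)+v_p(id^{t-1}/2)=v_p(q+1)+v_p(i)+(t-1)v_p(d),
\]
where I use $p$ odd to drop the factor $2$ and $v_p(q^2-1)=v_p(q+1)$. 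Taking the minimum with $v_p(d^t(q+1))=v_p(q+1)+tv_p(d)$ gives
\[
v_p(q+1)+(t-1)v_p(d)+\min\{v_p(d),v_p(i)\},
\]
which matches $v_p\bigl((q+1)d^{t-1}\gcd(d,i)\bigr)$.

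For the order computation, I would feed the identity back with $i=2d$: this is even, and one obtains $\gcd(d^t(q+1),q^{2d^t}-1)=(q+1)d^{t-1}\gcd(d,2d)=d^t(q+1)=N$, so $N\mid q^{2d^t}-1$ and therefore $\textup{ord}_N(q)\mid 2d^t$. To exclude every proper divisor $M$ of $2d^t$, I split on parity. If $M$ is odd, then $M\mid d^t$, and $q\equiv-1\pmod d$ gives $q^M-1\equiv -2\pmod d$; since $d$ is odd and $d>1$, this is nonzero mod $d$, so $N\nmid q^M-1$. If $M=2M'$ with $M'$ a proper divisor of $d^t$, then some prime $p\mid d$ satisfies $v_p(M')<tv_p(d)$, and the same LTE calculation as above yields
\[
v_p(q^M-1)=v_p(q+1)+v_p(M')<v_p(q+1)+tv_p(d)=v_p(N),
\]
so once more $N\nmid q^M-1$. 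Combining both cases gives $\textup{ord}_N(q)=2d^t$.

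The only real subtlety is setting up LTE correctly: the parity of the exponent must be exploited to pass from $q^m-1$ to $(q^2)^{m/2}-1$ where now $p\mid q^2-1$, and the oddness of $p$ (together with $p\mid q+1$) is what guarantees $p\nmid q-1$ and lets us equate $v_p(m/2)$ with $v_p(m)$. Once these points are in place, the rest is straightforward bookkeeping in $p$-adic valuations.
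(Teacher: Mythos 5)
Your proof is correct, but it takes a genuinely different route from the paper. For the gcd identity, the paper argues by induction on $t$: the base case $t=1$ comes from factoring $q^i-1=(q+1)\sum_{l=0}^{i-1}(-1)^{l+1}q^l$ and reducing the alternating sum modulo $d$, and the inductive step rests on a telescoping identity that yields the congruence $q^{i\cdot d^{k}}-1\equiv (q^{i\cdot d^{k-1}}-1)\cdot d \pmod{d^{k+1}(q+1)}$. You instead verify the identity prime by prime via the Lifting the Exponent lemma, using the parity of the exponent to pass to $(q^2)^{m/2}-1$ and the oddness of $p\mid d$ to get $v_p(q^m-1)=v_p(q+1)+v_p(i)+(t-1)v_p(d)$; this eliminates the induction entirely and makes the matching of both sides a one-line valuation comparison, at the modest cost of invoking LTE (which the paper avoids, keeping everything at the level of explicit congruences). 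For the order statement, both arguments use the gcd identity with exponent $2d^t$ to get $\mathrm{ord}_N(q)\mid 2d^t$; the paper then first shows the order is even and divisible by $d$ and derives a contradiction from a hypothetical smaller order written as $d^sv$, while you directly rule out every proper divisor $M$ of $2d^t$ by a parity split ($M$ odd gives $q^M-1\equiv-2\pmod d$, $M=2M'$ handled by the same LTE valuation count). Your treatment of the even case correctly notes that a proper even divisor forces $v_p(M')<t\,v_p(d)$ for some $p\mid d$, so the exclusion is complete, and the odd case uses $d>1$ odd to see $d\nmid 2$; both steps are sound. Overall your argument is shorter and more transparent, while the paper's is more elementary and self-contained.
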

\begin{proof}
We prove the first claim by induction on $t$. In the case $t=1$, we have
\begin{align*}
\gcd(d(q+1),q^{i}-1)=&\gcd\left(d(q+1),(q+1)\cdot\sum_{l=0}^{i-1}(-1)^{l+1}q^{l}\right)\\
=&(q+1)\cdot\gcd\left(d,\sum_{l=0}^{i-1}(-1)^{l+1}q^{l}\right).
\end{align*}
Since $q\equiv-1$ (mod $d$), we have $\sum_{l=0}^{i-1}(-1)^{l+1}q^{l}\equiv-i$ (mod $d$). The claim for $t=1$ then follows. Suppose that it holds for $t=k\geq1$, i.e.,
\begin{equation}\label{0}
\gcd\left(d^{k}(q+1), q^{i\cdot d^{k-1}}-1\right)=(q+1)\cdot d^{k-1}\cdot\gcd(d,i).
\end{equation}
Then $d^{k-1}(q+1)\mid q^{i\cdot d^{k-1}}-1$ and therefore $d^{k}(q+1)\mid (q^{i\cdot d^{k-1}}-1)^{2}$. Note that
\begin{center} $\sum_{l=2}^{d}(l-1)\cdot q^{i\cdot(d^{k}-l\cdot d^{k-1})}\equiv\sum_{l=2}^{d}(l-1)\equiv\frac{d(d-1)}{2}\equiv0$ (mod $d$).
\end{center}
Thus we have
\begin{equation}\label{1}
d^{k+1}(q+1)\Bigm| \left(q^{i\cdot d^{k-1}}-1\right)^{2}\cdot\left(\sum\limits_{l=2}^{d}(l-1)\cdot q^{i\cdot(d^{k}-l\cdot d^{k-1})}\right).
\end{equation}
Since  $q^{i\cdot d^{k}}-1=\left(q^{i\cdot d^{k-1}}-1\right)\cdot\sum_{l=1}^{d}q^{i\cdot(d^{k}-l\cdot d^{k-1})}$ and
\begin{equation*}\label{12}
\sum_{l=1}^{d}q^{i\cdot(d^{k}-l\cdot d^{k-1})}=\left(q^{i\cdot d^{k-1}}-1\right)\cdot\left(\sum_{l=2}^{d}(l-1)\cdot q^{i\cdot(d^{k}-l\cdot d^{k-1})}\right)+d,
\end{equation*}
we deduce that
\begin{equation}\label{2}
q^{i\cdot d^{k}}-1=\left(q^{i\cdot d^{k-1}}-1\right)^{2}\cdot\left(\sum_{l=2}^{d}(l-1)\cdot q^{i\cdot(d^{k}-l\cdot d^{k-1})}\right)+\left(q^{i\cdot d^{k-1}}-1\right)\cdot d.
\end{equation}
From (\ref{1}) and (\ref{2}), we have that $q^{i\cdot d^{k}}-1\equiv (q^{i\cdot d^{k-1}}-1)\cdot d$ (mod $d^{k+1}(q+1)$). By (\ref{0}),
\begin{align*}
\gcd\left(d^{k+1}(q+1),q^{i\cdot d^{k}}-1\right)
=&\gcd\left(d^{k+1}(q+1),\left(q^{i\cdot d^{k-1}}-1\right)\cdot d\right)\\
=&d\cdot\gcd\left(d^{k}(q+1),q^{i\cdot d^{k-1}}-1\right)=(q+1)\cdot d^{k}\cdot\gcd(d,i).
\end{align*}
This completes the proof of the first part of the lemma.

We next determine $e:=\tup{ord}_{N}(q)$, where $N=d^t(q+1)$ with $t>0$. Since $q^{e}-1\equiv (-1)^{e}-1\equiv 0$ (mod $d$), we deduce that $e$ is even. From $N\mid q^{e}-1$, we have $d^{t}\mid(q^{e}-1)/(q+1)$. Note that
\begin{center}
$(q^{e}-1)/(q+1)=\sum_{l=0}^{e-1}(-1)^{l+1}q^{l}\equiv-e$ (mod $d$).
\end{center}
Thus we have $d\mid e$. On the other hand, $$\gcd\left(N,q^{2d^{t}}-1\right)=(q+1)\cdot d^{t-1}\cdot\gcd(d,2d)=N,$$
so $e$ divides $2d^{t}$.  Assume to the contrary that $e<2d^{t}$. We write $e=d^{s}v$, where $1\leq s\leq t-1$ and $d\nmid v$. Then $q^{e}-1\mid q^{d^{t-1}v}-1$ and so $N\mid q^{d^{t-1}v}-1$. On the other hand, by \eqref{eqn_gcd} we have $$\gcd\left(N,q^{d^{t-1}v}-1\right)=(q+1)\cdot d^{t-1}\cdot\gcd(d,v)<N,$$
which is a contradiction. This completes the proof.
\end{proof}

\begin{lemma}\label{power}
Let $q$ be a prime power and $d>1$ be an odd divisor of  $q+1$. Let $u$ be a proper divisor of $d$, $t$ be a positive integer and set $n=d^{t}u$. Let $\delta$ be an element of order $q+1$ in $\mathbb{F}_{q^{2}}^{\ast}$. Then the polynomial
$$g_{n}(x):=\frac{(\delta x-1)^{n}-\delta(x-\delta)^{n}}{\delta^{n}-\delta}$$
has coefficients in $\F_q$ and is an irreducible polynomial in $\mathbb{F}_{q^{2}}[x]$ of degree $n$.
\end{lemma}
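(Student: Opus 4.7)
The plan is to verify three things in sequence: (i) $g_n(x)\in\mathbb{F}_q[x]$, (ii) $\deg g_n=n$, and (iii) $g_n(x)$ is irreducible over $\mathbb{F}_{q^2}$. The first two are routine manipulations in $\mathbb{F}_{q^2}$ using the hypotheses on $d$; the third is the heart of the argument, which I would handle by reducing to the irreducibility of the binomial $x^n-\delta$.

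For (i), apply the Frobenius $\sigma\colon x\mapsto x^q$ to the coefficients. Using $\delta^q=\delta^{-1}$ (from $\delta^{q+1}=1$) and the identity $\delta x-1=\delta(x-\delta^{-1})$, a direct computation (clearing denominators in $g_n(x)^\sigma$ by multiplying numerator and denominator by $\delta^n$) shows that both $g_n(x)^\sigma$ and $g_n(x)$ reduce to the common form
\[
\frac{(x-\delta)^n-\delta^{n-1}(x-\delta^{-1})^n}{1-\delta^{n-1}},
\]
so $g_n$ is Frobenius-invariant and its coefficients lie in $\mathbb{F}_q$. For (ii), the coefficient of $x^n$ in $(\delta x-1)^n-\delta(x-\delta)^n$ is $\delta^n-\delta$, so $g_n(x)$ is monic of degree $n$ provided $\delta^{n-1}\ne 1$; this holds because $d\mid n$ but $d\nmid(n-1)$ (as $d>1$), hence $(q+1)\nmid(n-1)$.

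For (iii), the first step is a M\"obius reduction. For any root $b$ of $g_n$, the value $b=\delta$ is excluded because plugging it into the numerator gives $(\delta^2-1)^n\ne 0$ (as $\delta\ne\pm 1$, since $\delta$ has order $q+1\ge 3$). Setting $\alpha=(\delta b-1)/(b-\delta)$ then converts $g_n(b)=0$ into $\alpha^n=\delta$, and since this is an $\mathbb{F}_{q^2}$-rational M\"obius bijection it identifies the Galois orbits of the roots of $g_n(x)$ with those of $x^n-\delta$. So $g_n(x)$ is irreducible over $\mathbb{F}_{q^2}$ iff $x^n-\delta$ is. The second step is to prove the latter. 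The cleanest route is the Vahlen--Capelli criterion: since $n$ is odd, irreducibility of $x^n-\delta$ reduces to showing that $\delta$ is not a $p$-th power in $\mathbb{F}_{q^2}^*$ for each prime $p\mid n$. Every such $p$ divides $d$, hence $q+1$, and the subgroup of $p$-th powers has index $p$ in $\mathbb{F}_{q^2}^*$, consisting of elements whose order divides $(q^2-1)/p$; so the order-$(q+1)$ element $\delta$ would be a $p$-th power iff $p(q+1)\mid q^2-1$, i.e.\ $p\mid q-1$, which is impossible since $p$ is odd and already divides $q+1$.

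The main obstacle, if one wants to avoid invoking Vahlen--Capelli and stay within the paper's own toolkit, is establishing $\tup{ord}_N(q^2)=n$ where $N=n(q+1)$: Lemma \ref{gcd} applied with $t$ replaced by $t+1$ and $i=2u$ (using $\gcd(d,2u)=u$ because $u\mid d$ and $d$ is odd) gives $N\mid q^{2n}-1$ and hence exhibits a root of $x^n-\delta$ in $\mathbb{F}_{q^{2n}}$, but ruling out proper divisors $e\mid n$ requires a prime-by-prime check on the $p$-adic valuations for each $p\mid d$. The subtle point is that $N=d^tu(q+1)$ carries an extra factor of $u$ beyond the level $d^t(q+1)$ handled directly by the lemma, so one must apply the lemma at level $t+1$ rather than $t$ and track how this extra $u$ propagates through the order computation.
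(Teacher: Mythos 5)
Your proposal is correct, and the irreducibility step takes a genuinely different route from the paper's. Parts (i) and (ii) are the same as in the paper (Frobenius invariance of the coefficients and $\delta^{n-1}\neq 1$). For (iii), the paper first passes to $\mathbb{F}_q$ (using that $n$ is odd), assumes a factor of minimal degree $i\le (n-1)/2$, applies the same fractional-linear substitution locally at a root to obtain $c^n=\delta$ with $c\in\mathbb{F}_{q^{2i}}$, and then reaches a contradiction on the order of $\delta$ via the explicit gcd/order computations of Lemma \ref{gcd}, which exists only to serve this step. You instead use the M\"obius change of variable globally, so that $g_n$ and $x^n-\delta$ have roots generating the same extensions of $\mathbb{F}_{q^2}$, and then invoke the Vahlen--Capelli criterion for binomials: since $n$ is odd it suffices that $\delta$ is not a $p$-th power in $\mathbb{F}_{q^2}^*$ for any prime $p\mid n$, which holds because every such $p$ is odd and divides $q+1$, hence not $q-1$. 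This is shorter, bypasses Lemma \ref{gcd} entirely, and actually proves more: irreducibility of $g_n$ needs only that $n$ is odd and every prime divisor of $n$ divides $q+1$, the shape $n=d^tu$ playing no role; what the paper's argument buys is self-containedness, replacing the citation of the binomial criterion by elementary arithmetic. Two points you should make explicit, both immediate from $\delta^{n-1}\neq 1$ and the $\mathbb{F}_{q^2}$-rationality of the map: no root of $x^n-\delta$ equals $\delta$, so the inverse M\"obius map has no pole at a root, and a root $b$ of $g_n$ and the corresponding $\alpha$ satisfy $\mathbb{F}_{q^2}(b)=\mathbb{F}_{q^2}(\alpha)$, which is the precise content of your claim that Galois orbits are identified and yields the equivalence of the two irreducibility statements. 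Your closing paragraph about recovering the binomial result through Lemma \ref{gcd} is an optional remark and is not needed for correctness.
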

\begin{proof}
Since $d$ divides $q+1$ and $n$, we see that $q+1$ does not divides $n-1$ and so  $\delta^{n}-\delta\neq0$. Hence $g_{n}(x)$ has degree $n$.
We first prove that $g_{n}(x)$ has coefficients in $\mathbb{F}_q$. To see this, it suffices to show that $g_{n}(x^{q})=g_{n}(x)^{q}$. We compute that
\begin{align*}
g_{n}(x)^{q}
=&\frac{(\delta^{q} x^{q}-1)^{n}-\delta^{q}(x^{q}-\delta^{q})^{n}}{\delta^{qn}-\delta^{q}}
=\frac{(\delta^{-1} x^{q}-1)^{n}-\delta^{-1}(x^{q}-\delta^{-1})^{n}}{\delta^{-n}-\delta^{-1}}\\
=&\frac{\delta( x^{q}-\delta )^{n}-(\delta x^{q}-1)^{n}}{\delta-\delta^{n}}
=g_{n}(x^{q}).
\end{align*}
Here, we have used the fact $\delta^{q+1}=1$.

Next, we prove that $g_{n}(x)$ is irreducible in $\mathbb{F}_{q^2}[x]$. Since the degree $n$ of $g(x)$ is relatively prime to $2$, it suffices to show that $g_{n}(x)$ is irreducible in $\mathbb{F}_{q}[x]$. Assume to the contrary that $g_{n}(x)$ is reducible in $\mathbb{F}_{q}[x]$. Let $h(x)$ be an irreducible factor of $g_{n}(x)$ in $\mathbb{F}_{q}[x]$ of smallest degree, and write $i=\deg(h(x))$. Then $1\leq i\leq\frac{n-1}{2}$, and the roots of $h(x)$ are in $\mathbb{F}_{q^{i}}$. Suppose that $x_{1}$ is a root of $h(x)$ in $\mathbb{F}_{q^{i}}$. From $g_n(x_1)=0$, we deduce that $\delta=\left(\frac{\delta x_{1}-1}{x_{1}-\delta}\right)^{n}$. Write $c=\frac{\delta x_{1}-1}{x_{1}-\delta}$, which lies in $\mathbb{F}_{q^{\tup{lcm}(2,i)}}$ and so $c^{q^{2i}-1}=1$. Since $\delta$ has order $q+1$ and $\delta=c^n$, we deduce that $c^{n(q+1)}=1$ and  $o(c)=m(q+1)$ for some divisor $m$ of $n$. We thus deduce that $m(q+1)$ divides $\gcd(n(q+1),q^{2i}-1)$. If we can show that $\gcd(n(q+1),q^{2i}-1)<n(q+1)$, then $m<n$ and we can deduce a contradiction as follows. Let $r$ be a prime divisor of $n/m$. Since $n=d^tu$ with $u\mid d$, we have $r\mid d$ and so $r\mid q+1$. It follows that $\gcd(n,m(q+1))=m\cdot\gcd(n/m,q+1)>m$. The order of $\delta=c^n$ is $\frac{o(c)}{\gcd(o(c),n)}=\frac{m(q+1)}{\gcd(n,m(q+1))}<q+1$: a contradiction.

It now remains to show that $D_i:=\gcd(n(q+1),q^{2i}-1)<n(q+1)$ for any $1\leq i\leq \frac{n-1}{2}$. If $d^t$ divides $i$, then $u>1$ and $2i=d^tv$ with $1\le v\le u-1$. Since $n=d^{t}u$ is a divisor of $d^{t+1}$,  $D_i$ divides $\gcd\left(d^{t+1}(q+1),q^{d^{t}\cdot v}-1\right)$. By Lemma \ref{gcd}, the latter number equals $(q+1)\cdot d^{t}\cdot\gcd(d,v)$, which is smaller than $n(q+1)$ as desired. It remains to consider the case where $d^t$ does not divide $i$. We have that $D_i$ divides $u\cdot\gcd(N,q^{2i}-1)$ with $N=d^t(q+1)$ in this case. By Lemma \ref{gcd},
$\textup{ord}_N(q)=2d^t$, so $\gcd(N,q^{2i}-1)<N$ by the fact that $d^t$ does not divide $i$. Therefore, $D_i<uN=n(q+1)$ as desired. This completes the proof.
\end{proof}

\noindent\textbf{Proof of Theorem \ref{main}.} Take the same notation as in Lemma \ref{power}. The polynomial $g_n(x)$ is irreducible over $\mathbb{F}_{q^2}$ of degree $n$. Since $d$ is an odd divisor of $q+1$ and $n=d^{t}u$, we have $\gcd(n^{2},q-1)=1$. By Lemma \ref{tripp} with $k=0$ and $\beta=\delta$, $x^{n}g_{n}(x^{q-1})$ permutates $\mathbb{F}_{q^{2}}$. Therefore, $g_{n}(x)$ satisfies condition (\ref{permutate}) by Lemma \ref{coprime}. The claim then follows from Lemma \ref{spread}. This completes the proof.

\section{Proof of Theorem \ref{second}}\label{s4}

Let $\Tr_{3^{k}/3}$ be the trace function from $\mathbb{F}_{3^{k}}$ to $\mathbb{F}_{3}$, i.e., $\Tr_{3^{k}/3}(x)=x+x^3+\cdots+x^{3^{k-1}}$.

\begin{lemma}\label{cubic}
\cite[Corollary 1.23]{ref4} Let $\mathbb{F}=\mathbb{F}_{3^{k}}$ and $b,c\in\mathbb{F}$. If $b=s^{2}$ with $s$ in $\mathbb{F}^{\ast}$ and $\Tr_{3^{k}/3}(c/s^{3})\neq0$, then $h(x)=x^{3}-bx-c$ has no roots in $\mathbb{F}$.
\end{lemma}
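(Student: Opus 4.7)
The plan is to reduce $h(x)=x^{3}-bx-c$ to Artin--Schreier form. Since $s\in\mathbb{F}^{\ast}$, the substitution $x=sy$ is a bijection on $\mathbb{F}$, and a root $x\in\mathbb{F}$ of $h$ corresponds to $y\in\mathbb{F}$ satisfying
\[
s^{3}y^{3}-s^{3}y-c=0,\qquad\text{i.e.,}\qquad y^{3}-y=c/s^{3}.
\]
Thus $h$ has a root in $\mathbb{F}$ if and only if $c/s^{3}$ lies in the image of the map $\varphi\colon\mathbb{F}\to\mathbb{F}$ defined by $\varphi(y)=y^{3}-y$.

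Next I would identify the image of $\varphi$. In characteristic $3$ one has $(y+z)^{3}=y^{3}+z^{3}$, and $\lambda^{3}=\lambda$ for $\lambda\in\mathbb{F}_{3}$, so $\varphi$ is $\mathbb{F}_{3}$-linear. Its kernel consists of the roots of $y^{3}-y$ in $\mathbb{F}$, which is exactly $\mathbb{F}_{3}$, so $\dim_{\mathbb{F}_{3}}\mathrm{Im}(\varphi)=k-1$. On the other hand, since the Frobenius and the trace commute, one has
\[
\Tr_{3^{k}/3}\bigl(y^{3}-y\bigr)=\Tr_{3^{k}/3}(y)-\Tr_{3^{k}/3}(y)=0
\]
for every $y\in\mathbb{F}$, so $\mathrm{Im}(\varphi)\subseteq\ker(\Tr_{3^{k}/3})$. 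Since $\ker(\Tr_{3^{k}/3})$ also has $\mathbb{F}_{3}$-codimension $1$ in $\mathbb{F}$, a dimension count gives the equality $\mathrm{Im}(\varphi)=\ker(\Tr_{3^{k}/3})$. Equivalently, $y^{3}-y=a$ has a solution in $\mathbb{F}$ if and only if $\Tr_{3^{k}/3}(a)=0$.

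Applying this with $a=c/s^{3}$ finishes the argument: the hypothesis $\Tr_{3^{k}/3}(c/s^{3})\ne 0$ forces $c/s^{3}\notin\mathrm{Im}(\varphi)$, and hence $h(x)$ has no root in $\mathbb{F}$. No step should present a real obstacle; the only point requiring care is the characterization of $\mathrm{Im}(\varphi)$ via the trace, which is the standard additive Hilbert~90 / Artin--Schreier surjectivity statement and is handled cleanly by the dimension argument above.
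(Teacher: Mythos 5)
Your proof is correct. The substitution $x=sy$ (a bijection of $\mathbb{F}$ since $s\neq 0$) does reduce $x^{3}-s^{2}x-c=0$ to the Artin--Schreier equation $y^{3}-y=c/s^{3}$, and your dimension count is sound: the map $\varphi(y)=y^{3}-y$ is $\mathbb{F}_{3}$-linear with kernel exactly $\mathbb{F}_{3}$, its image lies in $\ker(\Tr_{3^{k}/3})$ because the trace is Frobenius-invariant, and both subspaces have $\mathbb{F}_{3}$-codimension $1$, so they coincide; hence $\Tr_{3^{k}/3}(c/s^{3})\neq 0$ rules out a root. For comparison, the paper gives no proof of this lemma at all --- it simply quotes Corollary 1.23 of Hirschfeld's book --- so your argument is a self-contained replacement for that citation, and it is precisely the standard trace criterion for solvability of $y^{p}-y=a$ (additive Hilbert 90) specialized to $p=3$, which is the content of the quoted result.
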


\begin{lemma}\label{triirr}
Let $a\in\mathbb{F}_{3^{k}}^{\ast}$ such that $\Tr_{3^{k}/3}(a^{-1})\neq0$, then $h(x)=x^{3}+ax^{2}-ax+1$ is irreducible in $\mathbb{F}_{3^{2k}}[x]$.
\end{lemma}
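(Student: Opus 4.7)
The plan is to reduce irreducibility of $h(x)$ over $\mathbb{F}_{3^{2k}}$ to showing that $h$ has no root in the smaller field $\mathbb{F}_{3^k}$, and then to convert a hypothetical root in $\mathbb{F}_{3^k}$ into a root of an Artin--Schreier trinomial $y^{3}-y-a^{-1}$, to which Lemma~\ref{cubic} applies directly. For the reduction, $h$ is cubic, so irreducibility over any field is equivalent to the absence of roots. If $\alpha\in\mathbb{F}_{3^{2k}}$ were a root of $h$, its minimal polynomial $m(x)$ over $\mathbb{F}_{3^k}$ would divide $h$ in $\mathbb{F}_{3^k}[x]$ and have degree at most $[\mathbb{F}_{3^{2k}}:\mathbb{F}_{3^k}]=2$; writing $h=m\cdot q$ over $\mathbb{F}_{3^k}[x]$, at least one of $m,q$ is linear, so $h$ would have a root in $\mathbb{F}_{3^k}$. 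Hence it suffices to show that $h$ has no root in $\mathbb{F}_{3^k}$.

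Suppose for contradiction that $\alpha\in\mathbb{F}_{3^k}$ is a root of $h$. Direct evaluation gives $h(0)=1$, $h(1)=-1$, and $h(-1)=-a$, all nonzero since $a\in\mathbb{F}_{3^k}^{\ast}$, so $\alpha\notin\{0,\pm 1\}$. Rewriting $h(\alpha)=0$ as $\alpha^{3}+1=a\alpha(1-\alpha)$ and invoking the characteristic-$3$ identity $\alpha^{3}+1=(\alpha+1)^{3}$ produces the key relation $(\alpha+1)^{3}=a\alpha(1-\alpha)$. Now set $\gamma=(\alpha+1)^{-1}\in\mathbb{F}_{3^k}^{\ast}$; a short computation using $1-(\alpha+1)^{2}=-\alpha(\alpha-1)=\alpha(1-\alpha)$ in characteristic $3$ yields $\gamma^{3}-\gamma=\alpha(1-\alpha)/(\alpha+1)^{3}=a^{-1}$. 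Hence $\gamma\in\mathbb{F}_{3^k}$ is a root of $y^{3}-y-a^{-1}$, contradicting Lemma~\ref{cubic} applied with $b=1=1^{2}$, $s=1$, and $c=a^{-1}$, whose hypothesis $\Tr_{3^k/3}(a^{-1})\neq 0$ forbids any such root.

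The only nontrivial ingredient is spotting the M\"obius-type substitution $\gamma=(\alpha+1)^{-1}$ that turns the cubic $h$ into the Artin--Schreier form $y^{3}-y-a^{-1}$; once this is guessed, the verification collapses to a single use of the freshman's dream $(\alpha+1)^{3}=\alpha^{3}+1$ in characteristic~$3$, and no additional machinery beyond Lemma~\ref{cubic} is required.
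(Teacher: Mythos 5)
Your proof is correct and follows essentially the same route as the paper: the reduction to showing $h$ has no root in $\mathbb{F}_{3^k}$ matches the paper's parity-of-degree argument, and your substitution $\gamma=(\alpha+1)^{-1}$ is exactly the paper's identity $h(1/x-1)=\frac{-a}{x^{3}}\left(x^{3}-x-a^{-1}\right)$, followed by the same application of Lemma \ref{cubic} with $s=1$, $c=a^{-1}$.
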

\begin{proof}
Write $q=3^k$. Since $h(x)$ has coefficients in $\mathbb{F}_{q}$ and has odd degree $3$, it suffices to show that $h(x)$ is irreducible over $\mathbb{F}_{q}$, or equivalent it has no roots in $\mathbb{F}_{q}$.  Note that $h(x-1)=x^{3}+ax^{2}-a$ for $x\in\F_{q}$ and $h(-1)=-a\neq0$. We have $h(1/x-1)=\frac{-a}{x^{3}}(x^{3}-x-a^{-1})$ for $x\in\F_{q}^*$.
Set $s=1$ and $c=a^{-1}$ in Lemma \ref{cubic}, and we see that  $x^{3}-x-a^{-1}$ has no roots in $\mathbb{F}_{q}$. The claim now follows.
\end{proof}

\noindent\textbf{Proof of Theorem \ref{second}.}
\begin{proof}
Let $h(x)$ be as in Lemma \ref{triirr}, which is irreducible over $\F_{q^2}$ with $q=3^k$.  Moreover, Lemma \ref{quadripp} says that $x^{3}h(x^{q-1})=x^{3q}+ax^{2q+1}-ax^{q+2}+x^{3}$ permutes $\mathbb{F}_{q^2}$. Since $\gcd(3,q-1)=1$, the irreducible polynomial $h(x)$ satisfies condition (\ref{permutate}) with $m=d=3$ by Lemma \ref{coprime}. The claim then follows from Lemma \ref{spread}.
\end{proof}

\section{The equivalence issue}\label{s5}

A flag-transitive linear space $\mathcal{L}_{1}$, whose points form the field $\mathbb{F}_{q^{mm^{\prime}n}}$, is an \emph{inflation} of another flag-transitive linear space $\mathcal{L}_{2}$, whose points form the field $\mathbb{F}_{q^{mn}}$, if the lines of $\mathcal{L}_{2}$ are just those lines of $\mathcal{L}_{1}$ which are wholly contained in $\mathbb{F}_{q^{mn}}$. The following proposition describes how to obtain inflations of a linear space arising from Lemma \ref{spread} by \emph{keeping the polynomial the same but varying the field}.
\begin{lemma}\label{inflation}
\cite[Proposition 3]{ref1} Let $h(x)$ be an irreducible polynomial over $\mathbb{F}_{q^{2}}$ of degree $d\geq2$. Suppose $h(x)$ satisfies condition (\ref{permutate}) in the field $\mathbb{F}_{q^{2mm^{\prime}}}$. Then the following are equivalent:
\begin{enumerate}
\item[(i)] there is a flag-transitive linear space arising from $h(x)$ in the field $\mathbb{F}_{q^{2m}}$ and the flag-transitive linear space arising from $h(x)$ in the field $\mathbb{F}_{q^{2mm^{\prime}}}$ is an inflation of it;
\item[(ii)] the flag-transitive linear space arising from $h(x)$ in the field $\mathbb{F}_{q^{2mm^{\prime}}}$ is isomorphic to an inflation of some flag-transitive linear space with point set $\mathbb{F}_{q^{2m}}$;
\item[(iii)] $d$ divides $m$, and $m^{\prime}$ is coprime to $q+1$;
\item[(iv)] $h(x)$ satisfies condition (\ref{permutate}) in the field $\mathbb{F}_{q^{2m}}$.
\end{enumerate}
\end{lemma}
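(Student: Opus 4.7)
The plan is to establish the four conditions as equivalent by running the cycle $(iii)\Rightarrow(iv)\Rightarrow(i)\Rightarrow(ii)\Rightarrow(iii)$. Write $V_{s}:=\mathbb{F}_{q^{2s}}$ and let $C_{s}$ denote the scalar group of Lemma~\ref{spread} at level $s$, i.e.\ $\{u\in V_{s}^{*}:u^{(q^{2s}-1)/(q+1)}=1\}$. Two ingredients govern the whole argument. First, whenever $d\mid m$ one has the purely algebraic identity
\[
\bigl(x^{m}h(x^{q-1})^{m/d}\bigr)^{m'}\;=\;x^{mm'}h(x^{q-1})^{mm'/d}\qquad\text{for all }x\in\mathbb{F}_{q^{2}}^{*},
\]
so the ratio appearing in condition \eqref{permutate} at level $mm'$ is the $m'$-th power of the ratio at level $m$. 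Second, a short gcd computation in the spirit of Lemma~\ref{gcd} gives
\[
\bigl|C_{mm'}\cap V_{m}^{*}\bigr|\;=\;\frac{q^{2m}-1}{q+1}\cdot\gcd(q+1,m'),
\]
which quantifies exactly how much $C_{mm'}\cap V_{m}^{*}$ can exceed $C_{m}$ (obtained by factoring $(q^{2mm'}-1)/(q+1)=\bigl((q^{2m}-1)/(q+1)\bigr)\bigl(1+q^{2m}+\cdots+q^{2m(m'-1)}\bigr)$ and reducing the second factor modulo $q+1$ using $q^{2m}\equiv1$).

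For $(iii)\Rightarrow(iv)$: if the ratio $r$ at level $m$ lies in $\mathbb{F}_{q}^{*}$, then $r^{m'}$ (the ratio at level $mm'$) also does, and the standing hypothesis yields $x/y\in\mathbb{F}_{q}$; note this step does not even require $\gcd(m',q+1)=1$. For $(iv)\Rightarrow(i)$: Lemma~\ref{spread} produces a flag-transitive linear space $\mathcal{L}_{2}$ on $V_{m}$ from $h$, and since $b\in V_{m}$ and $V_{m}$ is a subfield one checks $u\ell_{b}\subseteq V_{m}\iff u\in V_{m}^{*}$. Hence the lines of $\mathcal{L}_{1}$ wholly contained in $V_{m}$ correspond bijectively to the $\mathbb{F}_{q}^{*}$-cosets in $C_{mm'}\cap V_{m}^{*}$; these are pairwise-disjoint $2$-dimensional $\mathbb{F}_{q}$-subspaces of $V_{m}$, so their count is at most $(q^{2m}-1)/(q^{2}-1)$. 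Combined with the displayed gcd formula this forces $\gcd(q+1,m')=1$, hence $C_{mm'}\cap V_{m}^{*}=C_{m}$, and consequently the lines of $\mathcal{L}_{1}$ in $V_{m}$ are precisely those of $\mathcal{L}_{2}$.

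$(i)\Rightarrow(ii)$ is immediate from the definition. For $(ii)\Rightarrow(iii)$, I would first use the homogeneity of $\mathcal{L}_{1}$ and a translation to reduce the abstract isomorphism so that the point set of the auxiliary $\mathcal{L}_{*}$ is identified with $V_{m}\subseteq V_{mm'}$. A brief Frobenius computation (expanding $(\alpha-b\alpha^{q})(1-b^{q^{2m}})-(\alpha-b^{q^{2m}}\alpha^{q})(1-b)=(\alpha-\alpha^{q})(b-b^{q^{2m}})$) shows that for $\alpha\in\mathbb{F}_{q^{2}}\setminus\mathbb{F}_{q}$ the ratio $(\alpha-b\alpha^{q})/(1-b)$ lies in $V_{m}$ if and only if $b\in V_{m}$; hence if $d\nmid m$ no $u\ell_{b}$ can possibly be contained in $V_{m}$ and $\mathcal{L}_{*}$ would have no lines through $0$, contradiction. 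So $d\mid m$, and equating the number of $\mathcal{L}_{*}$-lines through $0$ with $|C_{mm'}\cap V_{m}^{*}|/(q-1)$ together with the gcd formula yields $\gcd(q+1,m')=1$.

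The step I expect to be the main obstacle is $(iv)\Rightarrow(i)$: here the algebraic hypothesis \eqref{permutate} at level $mm'$ has to be converted, through the transitive $1$-spread structure, into the \emph{exact} group-theoretic equality $C_{mm'}\cap V_{m}^{*}=C_{m}$. The tightness of the match between the arithmetic formula for $|C_{mm'}\cap V_{m}^{*}|$ and the disjointness bound $(q^{2m}-1)/(q^{2}-1)$ is what makes the argument go through, so the preparatory gcd computation is indispensable. A smaller but genuine subtlety appears in $(ii)\Rightarrow(iii)$: legitimising the reduction ``$\mathcal{L}_{*}$-points $=V_{m}$'' requires care, as the abstract isomorphism is a priori a mere point-line bijection, and only after this reduction does the Frobenius calculation become meaningful.
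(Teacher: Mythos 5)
You should first note that the paper does not prove this lemma at all: it is quoted verbatim as \cite[Proposition 3]{ref1}, so there is no internal proof to compare against and your attempt has to stand on its own. Much of it does: the formula $|C_{mm'}\cap \mathbb{F}_{q^{2m}}^{*}|=\frac{q^{2m}-1}{q+1}\gcd(q+1,m')$ is correct, the implication (iii)$\Rightarrow$(iv) via raising the level-$m$ ratio to the $m'$-th power is fine, and your (iv)$\Rightarrow$(i) argument is a genuinely nice piece of counting: since the lines of $\mathcal{L}_1$ through $0$ lying in $\mathbb{F}_{q^{2m}}$ are pairwise disjoint $2$-dimensional $\mathbb{F}_q$-subspaces in bijection with the $\mathbb{F}_q^{*}$-cosets of $C_{mm'}\cap\mathbb{F}_{q^{2m}}^{*}$ (the stabiliser of $\ell_b$ in $C_{mm'}$ is exactly $\mathbb{F}_q^{*}$ by the spread property), the bound $(q^{2m}-1)/(q^2-1)$ forces $\gcd(q+1,m')=1$ and hence $C_{mm'}\cap\mathbb{F}_{q^{2m}}^{*}=C_m$. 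The Frobenius identity you use to show that $u\ell_b\subseteq\mathbb{F}_{q^{2m}}$ forces $b\in\mathbb{F}_{q^{2m}}$ also checks out.

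The genuine gap is exactly where you flag a ``smaller but genuine subtlety'': the step (ii)$\Rightarrow$(iii) is not proved. Condition (ii) only gives an abstract isomorphism $\phi$ from some flag-transitive inflation $\mathcal{M}$ (of some flag-transitive $\mathcal{M}_0$ on $\mathbb{F}_{q^{2m}}$) onto $\mathcal{L}_1$, and what you obtain inside $\mathcal{L}_1$ is the set $W=\phi(\mathbb{F}_{q^{2m}})$: a set of $q^{2m}$ points that is a subspace of the linear space $\mathcal{L}_1$ (a union of spread members through each of its points), carrying a flag-transitive sub-linear space. ``Homogeneity and a translation'' only lets you move one point of $W$ to $0$; it gives no control whatsoever over the rest of $W$, and nothing in your argument rules out that $W$ is some additively or multiplicatively wild union of spread elements rather than a translate of a scalar multiple of the subfield $\mathbb{F}_{q^{2m}}$. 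Your Frobenius computation and the coset count are only meaningful after the identification $W=\mathbb{F}_{q^{2m}}$, so the whole implication hinges on the unproved reduction. Closing it requires structural input you never invoke — e.g.\ that $\mathcal{L}_1$ is non-Desarguesian of the parameters at hand, so by the classification cited as \cite{ref11} its automorphism group lies in $\textup{A}\Gamma\textup{L}_1(q^{2mm'})$, and that the relevant isomorphism/subspace can then be realised semilinearly, forcing $W$ to be a translate of $c\,\mathbb{F}_{q^{2m}}$ — or some substitute argument classifying the $q^{2m}$-point subspaces of $\mathcal{L}_1$. As written, the cycle breaks at (ii)$\Rightarrow$(iii), so the equivalence is not established; the remaining implications (iii)$\Rightarrow$(iv)$\Rightarrow$(i)$\Rightarrow$(ii) are sound.
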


The linear spaces constructed in Theorem \ref{main} and Theorem \ref{second} are not Desarguesian by Lemma \ref{spread}, and are not inflations of one-dimensional flag-transitive linear spaces by (iii) of Lemma \ref{inflation} upon direct check. To compare with Kantor's constructions in \cite{ref12}, it suffices to compare with those of Type 4 by discussion in \cite{ref1}. The number of points in our constructions is $q^{2n}$ and the number of points on a line is $q^{2}$, where either (a) $n=d^{t}u$, $u\mid d$, $d\mid q+1$ and $d$ odd, or (b) $q=3^k$ and $n=3$. The number of points in a linear space of Kantor's constructions of Type $4$ is $q^{mn'}$ and the number of points on a line is $q^{n'}$ with $n'>1$ and $m$ dividing $q-1$. An isomorphism would imply $n'=2$ and $m=n$, in which case $n\mid q-1$: a contradiction. Therefore, our constructions are not isomorphic to those of Kantor \cite{ref12}.  The corresponding irreducible polynomials $h(x)$ in the proofs of Theorem \ref{main} and Theorem \ref{second} are not of the form $P(x^s)$ with $s\ge 3$ an odd integer coprime to $q+1$, so does not arise from Example 2 of \cite{ref1}. Our construction works for more values of $n$ compared with \cite{ref1} and \cite{ref13}, and so indeed we obtain flag-transitive linear spaces with new parameters.

\section{Concluding remarks}
In this paper, we build a connection between permutation polynomials of the form $x^{r}h(x^{(q-1)/s})$ and one-dimensional affine flag-transitive linear space. This leads to new infinite families of flag-transitive linear spaces by following the scheme described in \cite{ref1}. We show that they have new parameters and are not isomorphic to the known constructions. Our results indicate that there should be more such linear spaces that are associated to interesting permutation polynomials.

\vspace*{10pt}

\noindent\textbf{Acknowledgement.} This work was supported by National Natural Science Foundation of China under Grant No.11771392.

\begin{center}
	\scriptsize
	\setlength{\bibsep}{0.5ex}  
		\linespread{0.5}
	\bibliographystyle{plain}

\end{center}

\end{document}